\newtheorem{thm}{Theorem}
\newtheorem{defn}[thm]{Definition}
\newtheorem{lem}[thm]{Lemma}
\newtheorem{cor}[thm]{Corollary}
\newtheorem{proposition}[thm]{Proposition}
\theoremstyle{definition}
\begin{document}

\title{The Only Complex 4-Net Is the Hesse Configuration} 

\author{Alp Bassa}
\address{%
Bo\u gazi\c ci University,
Department of Mathematics,
34342 Istanbul,
Turkey
}
\email{alp.bassa@boun.edu.tr}

\author{Al\.i Ula\c{s} \"Ozg\"ur K\.i\c{s}\.isel}
\address{Middle East Technical University, Department of Mathematics,
06531 Ankara, 
Turkey
}
\email{akisisel@metu.edu.tr}

\keywords{line arrangements, nets, Hesse arrangement, signature, 4-manifolds} 
\subjclass[2010]{52C30, 14J10, 14N20, 57R20}

\begin{abstract} It has been conjectured that the only nets realizable in $\mathbb{CP}^2$ are 3-nets and the Hesse configuration (up to isomorphism). We prove this conjecture.
\end{abstract}

\maketitle

\section{Introduction} 
Nets are certain line arrangements in the projective plane, which naturally occur in the study of resonance varieties, homology of Milnor fibers and fundamental groups of curve complements \cite{LY, FY, DS, ABCAL}. We will work over the field of complex numbers, hence all arrangements in question will be in $\mathbb{CP}^{2}$. In this setting, one of the equivalent definitions of a net is as follows: Let $m\geq 3, d\geq 2$ be integers. An $(m,d)$-net is a pencil of algebraic curves of degree $d$ in $\mathbb{CP}^2$ with a base locus of exactly $d^{2}$ points, which degenerates $m$ times totally into a union of $d$ lines. In this paper we determine all pairs $(m,d)$ for which an $(m,d)$-net exists: The only possible values for the pair $(m,d)$ are $(3,d)$ where $d\geq 2$ can take any value, and $(4,3)$. 

The problem of determination of all possible values of $(m,d)$ was first posed as an open problem by Yuzvinsky in \cite{Yuz1}, where the restriction $m\leq 5$ was proven and examples of $(3,d)$-nets for any $d\geq 2$ were given. In \cite{Sti}, Stipins proved that $m\leq 4$. These two results were strengthened respectively in \cite{PY} and \cite{Yuz2} by taking into consideration the possibility of multiple components in the degenerate fibers. The existence of a $(4,3)$-net, the Hesse configuration, was classically known. Furthermore, it can be proven in several ways that the only $(4,3)$-net up to projective equivalence is the Hesse configuration, for instance see \cite{Sti, Gun}. Partial results for small values of $d$, more specifically the impossibility of a $(4,d)$-net for $4\leq d\leq 6$,  were also obtained \cite{DMWZ}. Morever, results in \cite{KNP} imply that $(4,d)$-nets do not exist for $d\equiv 2\ (\rm{mod}\ 3)$. However, the case $(4,d)$ with $d\geq 7$ and $d\equiv 0,1\ (\rm{mod}\ 3)$ remained open and such nets were conjectured not to exist \cite{Yuz1, Yuz2, Yuz3, Dim, DMWZ, BKN, Urz2, Urz3, DS}. Our results confirm this conjecture. 

The method that we follow is an adaptation of an idea that dates back to  Hirzebruch \cite{hirz} combined with a method of Yuzvinsky \cite{Yuz1} that uses fibrations over $\mathbb {CP}^1$:  Assuming the existence of an arrangement, Hirzebruch constructs an algebraic surface which is a multicyclic cover of $\mathbb{CP}^{2}$ with a certain branching structure along this arrangement. Except certain special configurations that he analyzes separately, his construction produces an algebraic surface of general type and the Bogomolov--Miyaoka--Yau inequality then imposes a restriction on the combinatorial possibilities for the line arrangement. This restriction does not rule out $(4,d)$-nets, therefore cannot directly be used. However in the case of nets, after blowing up $\mathbb{CP}^{2}$ at the points of the net, one obtains a natural fibration over $\mathbb{CP}^{1}$. In \cite{Yuz1}Yuzvinsky uses this fibration to show that $(m,d)$-nets do not exist for  $m\geq 6$. Although not expressed in those terms, his proof can be interpreted as the positivity of the Euler characteristic of a similar multiple cover. Other variants of the multiple cover construction were also successfully used in \cite{Urz1, Urz2, Urz3} and \cite{RU} both for the classification of $3$-nets and for resolving important open problems about the geography of surfaces. Inspired by these results, we investigate the signature of an algebraic surface constructed from a multiple cover of the  projective plane blown-up at the base locus of the net and compute it in two different ways. We show that in the conjecturally impossible cases, the equality of these two computations causes a contradiction.

\section{Nets and pencils of plane curves} 

\begin{defn} 
Suppose $d\geq 3$ and $m\geq 3$. An $(m,d)$-net in $\mathbb{CP}^{2}$ is a collection of $m$ disjoint sets of lines $\mathcal{A}_{1},\ldots,\mathcal{A}_{m}$ and a collection $\mathcal{X}$ of points such that 
\begin{itemize}
\item for every $x\in \mathcal{X}$ and for any $i$ there exists unique line in $\mathcal{A}_{i}$ containing $x$, 
\item for any $i\neq j$ the intersection of any line in $\mathcal{A}_{i}$ and any line in $\mathcal{A}_{j}$ belongs to $\mathcal{X}$ 
\end{itemize}
\end{defn}

It can be easily shown that $|\mathcal{X}|=d^{2}$ and also $|\mathcal{A}_{i}|=d$ for each $i$. 

\begin{proposition} \label{proposition:pencil}
Given an $(m,d)$-net as above, let $f$ and $g$ be degree $d$ forms vanishing precisely on the lines of $\mathcal{A}_{1}$ and on the lines of $\mathcal{A}_{2}$ respectively. Then the pencil $\{\mu f+\lambda g=0\}$, where $[\mu:\lambda]\in \mathbb{P}^{1}$, contains at least $m$ fibers which degenerate totally into lines, and the base locus of this pencil is $\mathcal{X}$. 
\end{proposition}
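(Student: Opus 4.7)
The plan is to identify the base locus directly from the net axioms, and then to produce one totally degenerate fiber of the pencil for each index $i\in\{1,\ldots,m\}$. For the base locus, a point $p\in\mathbb{CP}^{2}$ satisfies $f(p)=g(p)=0$ if and only if $p$ lies on some line of $\mathcal{A}_{1}$ and on some line of $\mathcal{A}_{2}$; the second net axiom forces any such intersection into $\mathcal{X}$, and conversely every $x\in\mathcal{X}$ lies on a unique line of $\mathcal{A}_{1}$ and a unique line of $\mathcal{A}_{2}$, so $f(x)=g(x)=0$. Hence the base locus of the pencil equals $\mathcal{X}$.

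The fibers over $[1:0]$ and $[0:1]$ are $\{g=0\}=\bigcup\mathcal{A}_{2}$ and $\{f=0\}=\bigcup\mathcal{A}_{1}$, so the cases $i=1,2$ are immediate. For $i\geq 3$ and any line $\ell\in\mathcal{A}_{i}$, I would restrict $f$ and $g$ to $\ell\cong\mathbb{CP}^{1}$. Since the $\mathcal{A}_{j}$ are pairwise disjoint, $\ell$ is not a component of $\{f=0\}$ or of $\{g=0\}$, so $f|_{\ell}$ and $g|_{\ell}$ are nonzero degree-$d$ forms on $\ell$. Each vanishes at the $d$ points of $\ell\cap\mathcal{X}$ (the intersections of $\ell$ with the $d$ lines of $\mathcal{A}_{1}$, respectively $\mathcal{A}_{2}$), and a degree count shows these are all their zeros. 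Therefore $f|_{\ell}$ and $g|_{\ell}$ have the same divisor on $\ell$ and are proportional, so $\ell$ is contained in a single fiber $C_{\ell}=\{f-c_{\ell}g=0\}$ of the pencil.

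The main obstacle is showing that the scalar $c_{\ell}$ depends only on $i$, not on the specific $\ell\in\mathcal{A}_{i}$. If two lines $\ell_{1},\ell_{2}\in\mathcal{A}_{i}$ produced distinct values $c_{1}\neq c_{2}$, then the distinct fibers $C_{\ell_{1}}$ and $C_{\ell_{2}}$ would meet only along the common base locus $\mathcal{X}$, forcing the point $\ell_{1}\cap\ell_{2}$ to lie in $\mathcal{X}$; but the first net axiom says each point of $\mathcal{X}$ lies on a unique line of $\mathcal{A}_{i}$, giving $\ell_{1}=\ell_{2}$, a contradiction. Hence a common $c_{i}$ exists with $\bigcup\mathcal{A}_{i}\subseteq\{f-c_{i}g=0\}$, and comparing degrees gives equality, producing a totally degenerate fiber. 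The $m$ fibers so obtained are pairwise distinct because the supports $\bigcup\mathcal{A}_{j}$ are pairwise distinct (as the $\mathcal{A}_{j}$ are disjoint families of lines), yielding the claimed $m$ totally degenerate fibers.
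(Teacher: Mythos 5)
Your proof is correct, but it takes a genuinely different route from the paper's. The paper identifies the base locus via Bezout (the two degree-$d$ curves meet transversally, so their intersection is exactly the $d^{2}$ points of $\mathcal{X}$, counted reduced) and then invokes Noether's $AF+BG$ theorem to conclude in one stroke that each form $f_{i}$ lies in the pencil spanned by $f$ and $g$. You instead argue elementarily: you identify the base locus set-theoretically straight from the two net axioms, restrict $f$ and $g$ to each line $\ell\in\mathcal{A}_{i}$ to see that both have divisor $\ell\cap\mathcal{X}$ (hence are proportional, placing $\ell$ in a single member of the pencil), and then glue the constants across a family by observing that two distinct members of the pencil meet only in $\mathcal{X}$, while the first net axiom forbids $\ell_{1}\cap\ell_{2}\in\mathcal{X}$ for distinct $\ell_{1},\ell_{2}\in\mathcal{A}_{i}$. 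All the steps check out (in particular, $|\ell\cap\mathcal{X}|=d$ and the simplicity of the zeros follow from the axioms plus the degree count, and $c_{\ell}\neq 0$ ensures the new fibers differ from $\{f=0\}$ and $\{g=0\}$). What your approach buys is self-containedness --- no appeal to $AF+BG$ --- at the cost of a longer gluing argument; what the paper's Bezout step buys additionally is the scheme-theoretic reducedness of the base locus (transverse intersection at each point of $\mathcal{X}$), which your purely set-theoretic identification does not record, though it is not needed for the statement as phrased.
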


\begin{proof}
It is clear that $\mathcal{X}$ is contained in the base locus. By the definition of a net, $\{f=0\}$ and $\{g=0\}$ intersect transversally, therefore by Bezout's theorem the intersection contains $d^2$ points, so it must be exactly $\mathcal{X}$. Let $f_{i}$ be the degree $d$ form cutting out the union of lines in $\mathcal{A}_{i}$ (so, $f_{1}=f$ and $f_{2}=g$). The claim that $\{f_{i}=0\}$ is an element of the pencil $\{\mu f+\lambda g=0\}$ follows from Noether's AF+BG theorem. 
\end{proof} 

Let us consider the line arrangement obtained by taking the union of all lines in all $\mathcal{A}_{i}$'s. Then, this is an arrangement of $md$ lines, each of which contains precisely $d$ points from $\mathcal{X}$. 

Let $S$ be the surface obtained by blowing up $\mathbb{CP}^2$ at all points of $\mathcal X$. There is a naturally defined fibration
\begin{eqnarray*}
\varphi:S &\to& \mathbb {CP}^1\\
p\in \{\mu f+\lambda g=0\}&\mapsto &[\mu:\lambda]
\end{eqnarray*}
Proposition \ref{proposition:pencil}  above implies that $\varphi$ has at least $m$ totally degenerate fibers, each corresponding to the union of lines in one of the $\mathcal{A}_{i}$'s. Say that these fibers, each of  which contain $d$ lines, are over $q_1, q_2,\ldots,q_m \in \mathbb{CP}^1$.

Let $E_1, \ldots, E_{d^2}$ be the exceptional fibers of the blow-up. Each $E_i$  intersects precisely one line in $\varphi^{-1}(q_j)$ for each $j$.

\section{An algebraic surface associated to an $(m,d)$-net} \label{sec2}
The fibered surface $\varphi: S\to \mathbb {CP}^1$ has $m$ special fibers $W_j:=\varphi^{-1}(q_j)$, $j=1,\ldots,m$, each fiber $W_j$ consisting of $d$ distinct rational curves given as the strict transforms of the lines in $\mathcal A_j$. We will denote this  arrangement of curves on $S$ by $W$. The arrangement $W$ consists of $md$ rational curves. A point $p\in S$ which lies on $r_p>2$ curves of the arrangement $W$ will be called a multiple intersection point. The set of these points will be denoted by ${\rm mult} W$. The number of points $p\in W$ with $r_p=r$ will be denoted by $t_r$. 
Note that we do this counting on the surface $S$, after we have blown-up $\mathbb {CP}^{2}$ in the $d^2$ points of $\mathcal X$ and hence removed the $m$-fold intersection points among lines from distinct sets $\mathcal A_i$. We denote
\begin{equation}\label{f0f1}
f_0:=\sum_{r\geq 2} t_r, \quad f_1:=\sum_{r\geq 2} r\cdot t_r.
\end{equation}
Hence the number of multiple intersection points is given by
$$\#{\rm mult}W=\sum_{r\geq 3} t_r=f_0-t_2.$$
As we have removed all intersections among lines from distinct sets $\mathcal A_i$, all intersections occur among the $d$ curves within each of the $\mathcal A_i$ in the $m$ corresponding fibers. Counting pairs of curves in two different ways we obtain
$$m\cdot {d \choose 2}=\sum_{r\geq 2}t_r\cdot {r\choose 2}.$$
In particular
\begin{equation}
\label{rsquaretr}
\sum_{r\geq 2}r^2\cdot t_r=md(d-1)+f_1.
\end{equation}

Let $\tau:\hat{S}\to S$ denote the blow-up of the surface $S$ at the multiple intersection points. As we blow up in the multiple points, the reduced total transform of $W$ will only have simple crossings. We will denote the total transform of  $W_j$ by $\hat{W_j}$ and the sum of  $\hat{W_j}$ by $\hat{W}$. Curves intersecting at multiple intersection points will be pulled apart and the intersection point will be replaced by a rational exceptional curve intersecting each of the strict transforms of the intersecting curves in a simple double point. The exceptional divisor introduced at an $r$-fold intersection point will have multiplicity $r$. Hence for each point of multiplicity $r$ we obtain $r$ simple double points formed by intersecting a rational curve of multiplicity $1$ and an exceptional divisor of multiplicity $r$. Denote the exceptional curve introduced by blowing up the multiple point $p$ by  $F_p$. We assume $n$ is a prime, such that $n\equiv 1 \pmod {r_p}$ for all multiplicities $r_p$ with $p\in {\rm mult}W$. Arbitrarily large such $n$ can be found as (by Dirichlet's Theorem) there infinitely many primes of the form $1+a\cdot \prod_{p\in {\rm mult}W}r_p, a  \in \mathbb N$.

We construct a multi-cyclic cover $Y$ of $\hat{S}$ as follows: 

Consider the divisors
$$\hat{D_i}=\hat{W}_{i}+(n-1)\hat{W}_{m},\quad i=1,..,m-1.$$
Note that $\mathcal O(\hat{D_i})\sim n\cdot\tau^*(\varphi^*(\mathcal O(1)))$, therefore each $\hat{D_i}$ is an $n$-divisible effective divisor. This data defines a multi-cyclic Kummer cover $\pi^{\prime}:X\to \hat{S}$ with Galois group $(\mathbb Z/n\mathbb Z)^{m-1}$ of degree $n^{m-1}$ with $n$-fold ramification along each $\hat{W}_{j}$ (see \cite[p.54]{BHH} and \cite{gao}). Let $\overline{X}$ denote its normalization and $Y$ a minimal desingularization of $\overline{X}$. Denote the composed map from $Y$ to $\hat{S}$  by $\pi:Y\rightarrow \hat{S}$. Each point $p\in {\rm mult} W$ of multiplicity $r$ will give rise to $n^{m-2}\cdot r$ isolated singularities of $\overline{X}$, each of which will be isomorphic to a singularity of the form $u^n=x\cdot y^r$ in local coordinates. The $t_2$ double points will give rise to singularities on $\overline{X}$, locally given by $u^n=x\cdot y$. All singularities are of Hirzebruch--Jung type and their total number is equal to $n^{m-2}(f_{1}-t_{2})$. 

A singularity of $\overline{X}$ given in local coordinates by $u^n=x\cdot y^r$ will be resolved through a chain of smooth rational exceptional curves $G_1, G_2,\ldots, G_t$. We have $G_i\cdot G_{i+1}=1$ for $i=1,\ldots, t-1$ and no further intersections. The number of exceptional divisors $t$ and the self-intersections can be computed using the negative continued fraction expansion of $n/(n-r)$: we have
\begin{align*}\frac{n}{n-r}=2-\frac{n-2r}{n-r}=2-\frac{1}{\frac{n-r}{n-2r}}=2-\frac{1}{2-\frac{n-3r}{n-2r}}=2-\frac{1}{2-\frac{1}{\frac{n-2r}{n-3r}}}=\cdots \\
\end{align*}
As $n\equiv 1\pmod r$ by assumption, the negative continued fraction expansion has length $t=(n-1)/r$ and is given by $[2,2,\ldots 2, r+1]$. The self-intersection of the $t$ exceptional divisors are $G_i \cdot G_i=-2$ for $1\leq i\leq t-1$ and $G_t\cdot G_t=-(r+1)$.

For $r=1$ (over each of the $t_2$ double points) the resolution introduces a chain of $n-1$ exceptional divisors, each with self-intersection $-2$.

\section{Computation of the signature from the ramification data}
We closely follow Hirzebruch \cite{hirz}. For a more detailed account see \cite{BHH}.
\subsection{Invariants of $\hat{S}$}
Let $l_{ij}$ be one of the lines in $\mathcal{A}_{i}$ and let $\psi: S\to \mathbb {CP}^2$ denote the blow-up map at the $d^2$ points of $\mathcal X$. Denote the strict transform of  $l_{ij}$ with respect to $\psi$ by $L_{ij}$. The total transform of $l_{ij}$ will be of the form
$$\psi^*(l_{ij})=L_{ij}+E_{k_1}+\cdots+E_{k_d}$$
for some distinct $k_1,\ldots,k_d \in \{1,2,\ldots,d^2 \}$. 

\begin{proposition} \label{prop:squares}
(a) For any fiber $F$ of $\varphi$, we have $F^2=0$. In particular,  we have  $\bigl(\varphi^{-1}(q_i)\bigr)^2=0$ for $i=1,2,\ldots,m$.

(b) $L_{ij}^2=1-d$.
\end{proposition}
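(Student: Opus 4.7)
For part (a), the plan is to invoke the standard fact that all scheme-theoretic fibers of a morphism $\varphi:S\to\mathbb{CP}^1$ are linearly equivalent as divisors, since they are pullbacks of points, and any two distinct points of $\mathbb{CP}^1$ are linearly equivalent there. In particular, two distinct fibers $F=\varphi^{-1}(q)$ and $F'=\varphi^{-1}(q')$ are disjoint, so $F\cdot F'=0$, and linear equivalence gives $F^2=F\cdot F'=0$. This applies both to smooth fibers and to the reducible $W_j=\varphi^{-1}(q_j)$.

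For part (b), I would use the decomposition $\psi^*(l_{ij})=L_{ij}+E_{k_1}+\cdots+E_{k_d}$ together with the standard identities for blow-ups of $\mathbb{CP}^2$: namely $E_k^2=-1$, $E_k\cdot E_{k'}=0$ for $k\neq k'$, and the projection formula $\psi^*(l_{ij})\cdot E_k=0$ for each exceptional divisor $E_k$. Since $l_{ij}$ is a line in $\mathbb{CP}^2$ we also have $(\psi^*l_{ij})^2=l_{ij}^2=1$. The most direct route is then to write $L_{ij}=\psi^*(l_{ij})-\sum_{s=1}^d E_{k_s}$ and expand:
\[
L_{ij}^2=(\psi^*l_{ij})^2-2\,\psi^*l_{ij}\cdot\sum_{s=1}^d E_{k_s}+\Bigl(\sum_{s=1}^d E_{k_s}\Bigr)^2 = 1-0+(-d)=1-d.
\]
The only thing to double-check along the way is that $l_{ij}$ contains exactly $d$ points of $\mathcal X$, which is precisely the combinatorial content of being a line of an $(m,d)$-net recorded right after Proposition~\ref{proposition:pencil}.

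There is essentially no serious obstacle here; both statements are formal consequences of intersection theory on a blow-up. The only thing one might want to record explicitly for later use, and which falls out of the same computation, is that $L_{ij}\cdot E_{k_s}=1$ for the $d$ base points on $l_{ij}$ (transverse intersection of the strict transform with each corresponding exceptional divisor) and $0$ otherwise; this is implicit in the expansion above via $\psi^*l_{ij}\cdot E_{k_s}=0$ and will be convenient in the subsequent signature computation.
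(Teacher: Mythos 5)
Your proof is correct and follows essentially the same route as the paper: part (a) is the standard linear-equivalence-of-fibers argument (which the paper dismisses as clear), and part (b) uses the same decomposition $\psi^*(l_{ij})=L_{ij}+\sum_s E_{k_s}$, merely rearranged so that the cross terms are handled via the projection formula $\psi^*(l_{ij})\cdot E_{k_s}=0$ rather than by plugging in $L_{ij}\cdot E_{k_s}=1$ directly as the paper does. The two computations are algebraically equivalent, and your closing remark that $L_{ij}\cdot E_{k_s}=1$ falls out of the same identities matches what the paper uses as input.
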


\begin{proof} 
(a) is clear. For (b), note that $l_{ij}^2=1, E_k^2=-1$. Therefore,
\begin{align*}
1&=l_{ij}^2=\bigl(\psi^*(l_{ij})\bigr)^2=(L_{ij}+E_{k_1}+\cdots+E_{k_d})^2\\
&=L_{ij}^2+E_{k_1}^2+\cdots+E_{k_d}^2+2\underbrace{L_{ij}\cdot E_{k_1}}_{1}+\ldots+2\underbrace{L_{ij}\cdot E_{k_d}}_{1}+\sum 2\underbrace{E_{k_j}\cdot E_{k_l}}_{0}\\
&=L_{ij}^2-d+2d = L_{ij}^2+d,
\end{align*}
from which the claim follows. 
\end{proof} 

It is well known that $K_{\mathbb {CP}^2}=-3[\ell]$ for a line $\ell \subseteq \mathbb {CP}^2.$
The canonical class of the blow up $S$ then equals 
\begin{equation}
\label{eqthree}
K_{S}=\psi^*K_{\mathbb {CP}^2}+\sum_{i=1}^{d^2}E_i= -3\psi^*([\ell])+\sum_{i=1}^{d^2}E_i
\end{equation}

Recall that $c_2(S)=e( S)$. Let $c_2=\langle c_2( S),[{S}]\rangle$ where $[{S}]$ denotes the fundamental class of ${S}$. Therefore $c_2=\chi( S)$. Since each blow-up increases the Euler characteristic by $1$, and $\chi(\mathbb {CP}^2)=3$, we get
\begin{equation}
\label{eqfour}
c_2=3+d^2.
\end{equation}
Note that $c_1({S})=-K_{{S}}$. Let $c_1^2$ denote $\langle c_1^2({S}),[{S}]\rangle$. Then $c_1^2=K_{{S}}^2$. So
$$c_1^2=\bigl(-3\psi^*([\ell])+\sum_{i=1}^{d^2}E_i\bigr)^2$$
Choosing $\ell$ so that it does not contain any points from $\mathcal X$, and letting $L=\psi^{-1}(\ell)$, we obtain
$$c_1^2=\bigl(-3L+\sum_{i=1}^{d^2}E_i\bigr)^2=9L^2+\sum_{i=1}^{d^2}E_i^2=9-d^2$$
(here we used $E_i\cdot L=0, E_i\cdot E_j=0\text{ for } i\neq j, L^2=1$).
So we have
\begin{equation}
\label{eqfive}
c_1^2=9-d^2.
\end{equation}

$\hat{S}$ is obtained from $S$ by a total of $\#{\rm mult}W=f_0-t_2$ blow-ups, each of which replaces a point $p\in S$ by a rational curve $F_p$, hence increases the Euler characteristic by $1$. Hence we have
$$e(\hat{S})=e(S)+f_0-t_2=3+d^2+f_0-t_2.$$
Similarly $e(W)=md\cdot 2-\sum_{r\geq 2}t_r(r-1)=md\cdot 2-f_1+f_0$ and
$$e(\hat{W})=e(W)+f_0-t_2=md\cdot 2-\sum_{r\geq 2}t_r(r-1)+f_0-t_2.$$

\subsection{Invariants of $Y$}
As $\pi:Y\to \hat{S}$ is a degree $n^{m-1}$ map with ramification index $n$ over points of $\hat{W}$, we obtain
\begin{equation*}e(Y)=n^{m-1}\cdot e(\hat{S}-\hat{W})+e(\pi^{*}(\hat{W}))
\end{equation*}

The total transform $\pi^{*}(\hat{W})$ will consist of $mn^{m-2}$ connected components. However, this term will contribute to the order $n^{m-1}$ term of $e(Y)$, since the resolution of each node will produce exceptional curves, whose number will be at the order of $n$. More precisely, while resolving each of the $r t_r n^{m-2}$ singular points above points of multiplicity $r>2$, we will introduce $(n-1)/r$ rational exceptional curves. Moreover the resolution of each of the $t_2 n^{m-2}$ singular points above the double points gives $n-1$ rational exception curves. In total these add $f_0$ to the coefficient of the $n^{m-1}$ term. We get 
\begin{equation*} e(Y)=n^{m-1} \cdot (3+d^2-2md+f_1) +O(n^{m-2}).
\end{equation*}
Next we compute $K_Y$ and the characteristic number $c_1^2(Y)=K_Y^2$. We have
$$K_{S}= -3\psi^*([\ell])+\sum_{i=1}^{d^2}E_i$$
and 
$$K_{\hat{S}}= \tau^* K_{S}+\sum_{p\in {\rm mult} W}F_p$$
Writing $\hat{W}=\sum_{p\in {\rm mult}W}r_p\cdot F_p+R$, with $R$ the strict transform of $W$ and considering the ramification behavior described above we have the $\mathbb Q$-numerical equivalence (see \cite[p. 85]{Urz3})
$$K_Y\equiv \pi^*\bigl(K_{\hat{S}}+\frac{n-1}{n}(\sum_{p\in {\rm mult}W}F_p+R)\bigr)+\Delta,$$
where $\Delta$ is a $\mathbb Q$-divisor supported on the exceptional locus of the desingularization. The exceptional divisors introduced while resolving points over the double points will have no contribution to $\Delta$, as in these cases the intersecting irreducible components will have the same multiplicity. 

$\Delta$ is supported on the exceptional locus and hence its image under $\pi$ is ${\rm mult} W$. The divisor $K_{\hat{S}}+\frac{n-1}{n}(\sum_{p\in {\rm mult}W}F_p+R)$ on the smooth surface $\hat{S}$ can be moved away from ${\rm mult} W$, so that the pull-back of the resulting divisor does not meet $\Delta$. So
$$\pi^*\bigl(K_{\hat{S}}+\frac{n-1}{n}(\sum_{p\in {\rm mult}W}F_p+R)\bigr)\cdot \Delta=0.$$
 Hence
\begin{equation}\label{ky2_1}
K_Y^2=n^{m-1}\cdot\bigl(K_{\hat{S}}+\frac{n-1}{n}(\sum_{p\in {\rm mult}W}F_p+R)\bigr)^2+\Delta^2.
\end{equation}
We have
\begin{multline*} \bigl(K_{\hat{S}}+\frac{n-1}{n}(\sum_{p\in {\rm mult}W}F_p+R)\bigr)^2\\
=\bigl(-3\tau^*\psi^*([\ell])+\sum_{i=1}^{d^2}\tau^*E_i+\sum_{p\in {\rm mult} W}F_p+\frac{n-1}{n}(\sum_{p\in {\rm mult}W}F_p+R)\bigr)^2.
\end{multline*}

Now 
$$R=\tau^*\psi^*M-m\sum_{i=1}^{d^2}\tau^*E_i-\sum_{p\in {\rm mult}W}r_pF_p,$$
where $M$ is the $(m,d)$-net we started with.
Moreover $\tau^*E_i\cdot F_p=0, E_i\cdot \psi^*M=0$.
So we obtain
\begin{align*}K_Y^2=&n^{m-1}\cdot \Bigl(\bigl(\tau^*\psi^*(-3[\ell]+\frac{n-1}{n}M)\bigr)^2+\bigl((1-m\frac{n-1}{n})\sum_{i=1}^{d^2}\tau^*E_i\bigr)^2\\&+\bigl(\sum_{p\in {\rm mult}W}	(1+\frac{n-1}{n}(1-r_p))F_p\bigr)^2\Bigr)+\Delta^2\\
=&n^{m-1}\cdot \Bigl(\bigl(-3+md\frac{n-1}{n}\bigr)^2-\bigl(1-m\frac{n-1}{n}\bigr)^2d^2\\&-\sum_{r\geq 3}t_r\bigl(1+\frac{n-1}{n}(1-r)\bigr)^2\Bigr)+\Delta^2 \\
=&n^{m-1}(md^{2}-5md-d^2+9+3f_{1}-4f_{0})+\Delta^2+O(n^{m-2}). 
\end{align*}
In the last step, \eqref{f0f1} and \eqref{rsquaretr} were used to simplify the expression. 

Next we show $\Delta^2=O(n^{m-2})$ (we follow the notation in \cite[Appendix]{Urz3}): All components of $\Delta$ lie over the points ${\rm mult}W$ and are supported at the exceptional divisors of the resolutions of singular points $q$ of $\overline{X}$ (there is no contribution from the singularities over double points). Letting $\Delta=\sum_{q} \Delta_q$, we get $\Delta^2=\sum_{q} \Delta_q^2$.
We will first estimate the contribution coming from the resolution of one singular point $q$ of $\overline{X}$
over a point of multiplicity $r$. As seen above the resolution will give rise to a chain of $t=(n-1)/r$ rational exceptional curves $G_1,G_2,\ldots, G_t$ and we have $\Delta_q=\sum_{i=1}^{t}\alpha_i G_i$. The $\alpha_i$ can be calculated using formulas in \cite[Appendix]{Urz3}:  
$$ \alpha_{i}=\frac{i(1-r)}{n},\ {\rm for}\ i=1,\ldots,t.$$
As $G_{i}^{2}=-2$ for $i\leq t-1$, $G_{t}^{2}=-(r+1)$ and  $G_{i}\cdot G_{i+1}=1$ and all other pairs give $0$ intersection, we get, 
\begin{align*} \Delta_q^{2}&=  -2\alpha_{1}^2-2\alpha_{2}^{2}-\ldots-2\alpha_{t-1}^{2}-(r+1)\alpha_{t}^{2}+2\alpha_{1}\alpha_{2}+2\alpha_{2}\alpha_{3}+\ldots+2\alpha_{t-1}\alpha_{t}\\
&=-2\alpha_{1}(\alpha_{1}-\alpha_{2})-2\alpha_{2}(\alpha_{2}-\alpha_{3})-\ldots-2\alpha_{t-1}(\alpha_{t-1}-\alpha_{t})-(r+1)\alpha_{t}^{2} 
\end{align*}
But now $\alpha_{i}-\alpha_{i+1}=\frac{r-1}{n}$ and $|\alpha_{i}|\leq 1$ which shows that $\Delta_q^{2}=O(1)$. As there are a total of $n^{m-2}(f_{1}-2t_{2})$ such singular points $q$, we have $\Delta^2=O(n^{m-2})$. Hence we get
$$K_Y^2=n^{m-1}(md^{2}-5md-d^2+9+3f_{1}-4f_{0})+O(n^{m-2}).$$

Using the expressions for $e(Y)$ and $K_Y^2$ we compute the top order term of the signature $\sigma(Y)$. We obtain the following result: 
\begin{thm}
$$\sigma(Y)=\frac{1}{3}n^{m-1}\Bigl((m-3)d^2-md+3+f_1-4f_0\Bigr)+O(n^{m-2}).$$
\label{thm4}
\end{thm}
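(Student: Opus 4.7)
The plan is to apply Hirzebruch's signature theorem: for a smooth compact complex surface,
$$\sigma = \tfrac{1}{3}(c_1^2 - 2c_2) = \tfrac{1}{3}\bigl(K_Y^2 - 2e(Y)\bigr).$$
This applies to $Y$ because $Y$ is smooth projective by construction, being the minimal desingularization of $\overline{X}$. Since the preceding subsection has already produced the leading-order asymptotics
\begin{align*}
e(Y) &= n^{m-1}(3 + d^2 - 2md + f_1) + O(n^{m-2}), \\
K_Y^2 &= n^{m-1}(md^2 - 5md - d^2 + 9 + 3f_1 - 4f_0) + O(n^{m-2}),
\end{align*}
the theorem reduces to a direct substitution.

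Computing $K_Y^2 - 2e(Y)$, I would collect coefficients monomial by monomial: $md^2$ appears with coefficient $1$; $d^2$ with $-1-2=-3$; $md$ with $-5+4=-1$; $f_1$ with $3-2=1$; $f_0$ with $-4$; and the constant is $9-6=3$. This gives
$$K_Y^2 - 2e(Y) = n^{m-1}\bigl((m-3)d^2 - md + 3 + f_1 - 4f_0\bigr) + O(n^{m-2}),$$
and multiplying by $\tfrac{1}{3}$ yields the claim; the error terms in the two inputs manifestly propagate to an $O(n^{m-2})$ error in $\sigma(Y)$.

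No substantive obstacle is expected here. The hard work — the ramification analysis producing $e(Y)$, the $\mathbb{Q}$-numerical decomposition of $K_Y$, the bound $\Delta^2 = O(n^{m-2})$ from the negative continued fraction data, and the simplifications via \eqref{f0f1} and \eqref{rsquaretr} — has already been performed. The one conceptual point worth noting explicitly is that the signature formula is being applied to the smooth model $Y$ rather than to the singular intermediate $\overline{X}$, which is precisely why the desingularization was constructed; beyond this, the step is elementary algebra.
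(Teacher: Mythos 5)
Your proposal is correct and is exactly what the paper does: it substitutes the previously derived leading-order expressions for $e(Y)$ and $K_Y^2$ into $\sigma(Y)=\tfrac{1}{3}(K_Y^2-2e(Y))$, and your coefficient bookkeeping matches. No further comment is needed.
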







\section{Estimating the signature using the fibration over $\mathbb{CP}^1$}
In this section we will estimate the signature of $Y$ by using a different method; we want to make use of the fact that the ramification locus of the cover $\pi$ is trapped in the fibers of $\varphi: \hat{S}\rightarrow \mathbb{CP}^{1}$.  

Let $U_i$ be a sufficiently small open disk in $\mathbb {CP}^1$ centered at $q_i$. Let $T=\varphi^{-1}({\mathbb CP}^1\backslash \cup U_i)$ and $\hat{T}=\tau^{-1}(T)$. Notice that $\hat{T}$ and $T$ are homeomorphic since the blow-ups involved in the map $\tau$ only affect the fibers of $\varphi$ over $q_1,\ldots, q_m$. Recall that the signature of a manifold with boundary can be defined in a way similar to the case of a closed manifold, see \cite{GS, Oz}.
\begin{lem} \label{lem6} The signature  $\sigma(\varphi^{-1}(U_i))$ of the manifold $\varphi^{-1}(U_{i})$ with boundary is equal to $1-d$.
\end{lem}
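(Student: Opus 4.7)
The plan is to compute $\sigma(\varphi^{-1}(U_i))$ directly from the intersection form on its second homology. Since $\varphi^{-1}(U_i)$ is a regular neighborhood of the singular fiber $W_i = L_{i1} \cup \cdots \cup L_{id}$ in the smooth surface $S$, it deformation retracts onto $W_i$. As $W_i$ is a union of $d$ rational curves meeting in finitely many points, its second homology is free of rank $d$, with basis $[L_{i1}], \ldots, [L_{id}]$. By the definition of signature for a $4$-manifold with boundary recalled in \cite{GS, Oz} just before the lemma, $\sigma(\varphi^{-1}(U_i))$ equals the signature (positive minus negative eigenvalues, with null directions contributing $0$) of the intersection form on this lattice.

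The main step is then to compute the $d \times d$ intersection matrix $M$ of the $L_{ij}$'s. By Proposition \ref{prop:squares}(b) the diagonal entries are $L_{ij}^2 = 1-d$. For the off-diagonal entries with $j \neq j'$, the two distinct lines $l_{ij}$ and $l_{ij'}$ meet in $\mathbb{CP}^2$ at a unique point $p$; I would then argue that $p \notin \mathcal{X}$, since otherwise the uniqueness axiom in the definition of a net would force the two distinct lines of $\mathcal{A}_i$ through $p$ to coincide. Hence $\psi$ is a local isomorphism near $p$ and $L_{ij} \cdot L_{ij'} = l_{ij} \cdot l_{ij'} = 1$.

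Therefore $M = J - dI$, where $J$ is the all-ones matrix and $I$ the identity. The all-ones vector lies in $\ker M$ (consistent with $W_i^2 = 0$ from Proposition \ref{prop:squares}(a)), and on the orthogonal hyperplane of vectors summing to zero the matrix $M$ acts by $-d$. So the eigenvalues are $0$ (simple) and $-d$ (with multiplicity $d-1$), yielding $\sigma(\varphi^{-1}(U_i)) = 0 - (d-1) = 1-d$.

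I do not anticipate a real obstacle: the only subtleties are the deformation retraction onto the (singular) fiber and the convention for signature of a $4$-manifold with boundary, both standard. The step that most merits care is the verification that $L_{ij}\cdot L_{ij'}=1$ for $j\neq j'$, since it is precisely there that the combinatorics of a net (uniqueness of the line of $\mathcal{A}_i$ through each point of $\mathcal{X}$) enters the computation.
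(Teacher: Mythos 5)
Your proposal is correct and follows essentially the same route as the paper: retract $\varphi^{-1}(U_i)$ onto the singular fiber, identify $H_2$ with the lattice spanned by the $d$ lines, and compute the signature of the intersection matrix with diagonal entries $1-d$ and off-diagonal entries $1$. The paper leaves the eigenvalue computation and the verification that distinct lines of $\mathcal{A}_i$ meet away from $\mathcal{X}$ implicit; your filling in of both (in particular the appeal to the uniqueness axiom of a net) is accurate.
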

\begin{proof}
By Lefschetz duality, $H^2(\varphi^{-1}(U_i),\partial( \varphi^{-1}(U_i)))$ is isomorphic to $H_{2}(\varphi^{-1}(U_i))$, which in turn is generated by the classes of the $d$ lines in $\varphi^{-1}(q_i)$, since $\varphi^{-1}(U_i)$ can be retracted to $\varphi^{-1}(q_i)$.  The signature then can be calculated directly, noting that the self-intersection number of any of these lines is $1-d$ and the intersection number of any two distinct lines is $1$.
\end{proof}
By Novikov additivity 
\begin{align*}
\sigma(\hat{T})=\sigma(T)&=\sigma(S)-m\cdot(1-d)\\
&=(1-d^2)-m\cdot(1-d)\\
&=(1-d)(1-m+d).
\end{align*}
Then 
\begin{equation} \label{negative}
-a:=\sigma(\hat{T})<0  \text{ for } d>m-1.
\end{equation}

Notice that $\pi:\pi^{-1}( \hat{T}) \to \hat{T}$ is unramified of degree $n^{m-1}$. In general the signature of a manifold with boundary is not  multiplicative under unramified covers. The discrepancy can be calculated using the $\eta$ invariant appearing in the Atiyah--Patodi--Singer Index Theorem \cite{APS, Ati}. Choose a Riemannian metric on $\hat{T}$ cylindrical near the boundary, in other words such that a neighborhood of its boundary $\partial(\hat{T})$ is isometric to $[0,1]\times \partial(\hat{T})$. Let $p_{1}(\hat{T})$ be the first Pontryagin form obtained from the resulting Riemannian connection. By a special case of the APS Theorem (see \cite{APS}, Theorem 2), we have
$$\sigma(\hat{T})-\int_{\hat{T}}p_1(\hat{T})=\eta(\partial \hat{T}).$$
By using the same theorem for $\pi^{-1}(\hat{T})$ equipped with the pull back metric with respect to $\pi$, we obtain
$$\sigma(\pi^{-1}(\hat{T}))-\int_{\pi^{-1}(\hat T)}p_1(\pi^{-1}(\hat{T}))=\eta(\partial (\pi^{-1}(\hat{T}))).$$
Since the integral of the Pontryagin form is obviously multiplicative under the unramified cover, we obtain
\begin{equation}
\label{etaeq}
\sigma(\pi^{-1}(\hat{T}))-n^{m-1}\sigma({\hat{T}})=\eta(\partial (\pi^{-1}(\hat{T})))-n^{m-1}\eta(\partial \hat{T}).
\end{equation}
A nice feature of this formula is that the two sides are obviously independent of the choice of the Riemannian metric, moreover the right hand side only depends only on the boundary 3-manifold. We will now construct a new 4-manifold with boundary $Z$, cobordant to $\hat{T}$. Then, by making a direct computation on $Z$,  we will show that the right hand side of \eqref{etaeq} is zero in this case.

Recall that $U_1,\ldots U_m$ in ${\mathbb CP^1}$ are  small enough disjoint disks around $q_1,\ldots,q_m$, with boundaries $K_1,\ldots,K_m$, each homeomorphic to a circle. Without loss of generality we may assume that $\varphi$ is a fiber bundle over each $K_i$ with fibers homeomorphic to an oriented genus $g$ surface $\Sigma_g$, where $g=(d-1)(d-2)/2$. We next want to prove that this genus $g$ surface fibration over $K_{i}$ is the boundary of a genus $g$ handlebody fibration over $K_{i}$. 

\begin{lem} 
The fibration $\varphi$ described above lifts to a genus $g$ handlebody fibration $\tilde{\varphi}$. More precisely, there exists a smooth $4$-manifold $Z$ with boundary equal to $\partial{\hat{T}}$ and a map $\tilde{\varphi}:Z\rightarrow \bigcup K_{i}$ such that for every $p\in \bigcup K_{i}$, the fiber $\tilde{\varphi}^{-1}(p)$ is diffeomorphic to a genus $g$ handlebody $N_{g}$. Furthermore, $\partial(\tilde{\varphi}^{-1}(p))=\varphi^{-1}(p)$. 
\end{lem}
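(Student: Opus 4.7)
Our plan is to construct $Z$ as a disjoint union $Z=\bigsqcup_{i=1}^m Z_i$, where each $Z_i\to K_i$ is a bundle of genus-$g$ handlebodies with $\partial Z_i=\varphi^{-1}(K_i)$. Since $\varphi^{-1}(K_i)\to K_i$ is a smooth $\Sigma_g$-bundle over $S^1\cong K_i$, constructing $Z_i$ amounts to extending its monodromy $\mu_i\in\mathrm{MCG}(\Sigma_g)$ to a self-diffeomorphism $\tilde\mu_i$ of $N_g$; one then lets $Z_i$ be the mapping torus of $\tilde\mu_i$. The task thus reduces to showing that $\mu_i$ lies in the handlebody subgroup of $\mathrm{MCG}(\Sigma_g)$.

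I would first exhibit a natural handlebody filling of the smooth fiber adapted to the degeneration. Topologically, $\Sigma_g$ is a smoothing of $W_i=\bigcup_{j=1}^{d}L_{ij}$: it is built from the $d$ spheres normalising the components of $W_i$ by deleting a disk at each preimage of an intersection point of the arrangement and gluing in the local Milnor fibre of the corresponding plane-curve singularity (a cylinder at a simple node, and a genus-$(r-1)(r-2)/2$ surface with $r$ boundary circles at a point of multiplicity $r$). For each unordered pair $\{j,j'\}$, the lines $L_{ij}$ and $L_{ij'}$ meet at a unique point $p_{jj'}$, and I pick a simple closed curve $\gamma_{jj'}$ inside the Milnor fibre at $p_{jj'}$ separating the branches $j$ and $j'$ from the other branches (the core of the cylinder at a simple node; one of the $\binom{r}{2}$ standard curves that cut the Milnor fibre into $r$ sphere-with-$r$-holes pieces at a higher-multiplicity point). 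A direct Euler-characteristic computation confirms that the $\binom{d}{2}$ disjoint curves $\gamma_{jj'}$ cut $\Sigma_g$ into $d$ planar pieces $P_j$, each a sphere with $d-1$ boundary circles, one per line $L_{ij}$. Capping each $P_j$ by a $3$-ball and then gluing the balls together along meridional disks $D_{jj'}$ spanning the $\gamma_{jj'}$ (the gluing graph is $K_d$, of first Betti number $g$) yields a genus-$g$ handlebody $N_g$ with $\partial N_g=\Sigma_g$ in which every $\gamma_{jj'}$ bounds a disk.

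It remains to show that $\mu_i$ preserves this handlebody structure up to isotopy. Since the intersection points of the arrangement have pairwise disjoint local neighborhoods, the classical Picard--Lefschetz picture presents $\mu_i$ as a commuting product of local monodromies, one per intersection point. A simple-node contribution is a Dehn twist along the corresponding $\gamma_{jj'}$, which extends across the disk $D_{jj'}\subset N_g$. A multiplicity-$r$ contribution is the monodromy of the homogeneous singularity $\prod_{k=1}^{r}(y-a_{k}x)=t$; lifting along the vector field $\tfrac{i}{r}(x\partial_x+y\partial_y)+it\partial_t$ identifies it with the scaling $(x,y)\mapsto(e^{2\pi i/r}x,e^{2\pi i/r}y)$ on the local Milnor fibre, an order-$r$ diffeomorphism that preserves each branch individually, hence fixes each $\gamma_{jj'}$ through $p$ setwise and each disk $D_{jj'}$ of the local handlebody, and acts by rotation on each boundary circle of the fibre. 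Such a map visibly extends over the local piece of $N_g$ (which is itself $\mathbb{Z}/r$-symmetric). Composing the local extensions produces the required $\tilde\mu_i$.

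The most delicate point is the honest identification of the local monodromy at a higher-multiplicity point and the verification that it respects the handlebody structure. One can sidestep this analysis by working instead with a semistable model of the degeneration, obtained by a base change $t\mapsto t^n$ combined with the blow-ups already used in Section~\ref{sec2} to construct $\hat S$. In the semistable model every local monodromy becomes literally a product of Dehn twists along a subset of the $\gamma_{jj'}$, each of which bounds a disk in $N_g$, so that extension to $N_g$ is automatic.
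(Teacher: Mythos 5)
Your overall strategy coincides with the paper's: reduce to a single boundary circle $K_i$, realize $Z_i$ as the mapping torus of an extension of the surface monodromy $\mu_i$ to a genus-$g$ handlebody, and reduce this to the statement that $\mu_i$ lies in the handlebody subgroup because the relevant curves bound pairwise disjoint disks. Your construction of the handlebody is more explicit and arguably cleaner than the paper's: where the paper cuts $\Sigma_g$ along the nonseparating vanishing cycles and argues on a $4g$-gon model, you build $N_g$ directly from the dual graph $K_d$ of the degenerate fibre ($d$ balls glued along $\binom{d}{2}$ meridional disks spanning the branch-separating curves $\gamma_{jj'}$); the Euler-characteristic and Betti-number checks are correct, and your treatment of nodal points (Dehn twist about $\gamma_{jj'}$ extended by a disk twist across $D_{jj'}$) is exactly the paper's mechanism.

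The genuine gap is in the proposed sidestep for higher-multiplicity points. The base change $t\mapsto t^n$ replaces the monodromy $\mu_i$ by $\mu_i^n$: the semistable model is a family over a circle that $n$-fold covers $K_i$, so its boundary is a cover of $\partial\hat T$, not $\partial\hat T$ itself. Knowing that $\mu_i^n$ is a product of Dehn twists about disjoint disk-bounding curves (hence extends over $N_g$) does not imply that $\mu_i$ extends --- the handlebody subgroup of the mapping class group is not closed under taking roots --- and the lemma requires $\partial Z=\partial\hat T$, i.e.\ the mapping torus of $\mu_i$ itself. So the sidestep cannot replace the ``delicate point'' you flag, and you must carry out the direct local analysis: identify the \emph{rel-boundary} local monodromy at an ordinary $r$-fold point as the order-$r$ rotation composed with the $1/r$-fractional untwists in the boundary collars needed to match the identity outside the Milnor ball (checking that no extra integral boundary twists appear), and verify that this composite extends over the local piece of $N_g$ together with collars of the separating disks. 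This does work --- the rotation extends as the evident $\mathbb{Z}/r$-symmetry of the local piece, and each boundary circle of the local Milnor fibre bounds a properly embedded disk in $N_g$, so the collar corrections extend too --- but it is precisely the step that has to be written out; note that a bare rotation is the monodromy only up to boundary behavior, and that for $r\geq 3$ the local monodromy is \emph{not} a product of Dehn twists about disjoint curves, so the disjoint-vanishing-cycle shortcut (which the paper itself invokes) is only literally available when every singular point of $W_i$ is a node.
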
 

\begin{proof}
It is enough to prove the statement for one connected component $K_{i}$ of the base. Suppose that $p\in K_{i}$ and fix a diffeomorphism $\varphi^{-1}(p)\cong \Sigma_{g}$. Then the monodromy of the fibration $\varphi$ is given by a composition of Dehn twists around the vanishing cycles of the Lefschetz fibration $\varphi: \varphi^{-1}(U_{i})\rightarrow U_{i}$. However, above the central fiber $q_{i}$ the singular fiber is a union of $d$ rational curves, hence it is possible to take all of these vanishing to be pairwise disjoint. We claim that an element of the mapping class group of $\Sigma_{g}$ that is a composition of Dehn twists around pairwise disjoint vanishing cycles lifts to an element of the handlebody mapping class group of $N_{g}$, namely it is possible to construct $N_{g}$ such that $\partial(N_{g})=\Sigma_{g}$ and pairwise disjoint disks in $N_{g}$ whose boundaries are these vanishing cycles, so that the Dehn twists will lift to disk twists: Indeed, cut $\Sigma_{g}$ along the $\leq g$ nonseparating ones among these cycles and with additional nonseparating cycles not intersecting these ones if necessary in order to obtain a double cover of the standard $4g$-gon model with cylindrical ends above $2g$  sides and ramified over the others, placed in an alternating way along the boundary of the $4g$-gon. But then the remaining (separating) vanishing cycles will still be disjoint in this model and not intersecting the boundary, and therefore it is clear that we can fill all the vanishing cycles in by pairwise disjoint disks. By using this claim we can now construct $Z$ and the fibration $\tilde{\varphi}$ as the mapping torus of the resulting element of the mapping class group of $N_{g}$. 
\end{proof}


\begin{lem} The intersection form on $H^{2}(Z, \partial Z, \mathbb{Z})$ is trivial. In particular, $\sigma(Z)=0$.  
\label{lemma6}
\end{lem}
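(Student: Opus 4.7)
The plan is to use Poincar\'e--Lefschetz duality to translate the triviality of the cup-product pairing on $H^{2}(Z,\partial Z;\mathbb{Z})$ into triviality of the geometric intersection form on $H_{2}(Z;\mathbb{Z})$, and then exploit the explicit handlebody fibration structure from the previous lemma to exhibit a basis of $H_{2}(Z;\mathbb{Z})$ by tori with vanishing pairwise intersection numbers.

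By Poincar\'e--Lefschetz duality $H^{2}(Z,\partial Z;\mathbb{Z})\cong H_{2}(Z;\mathbb{Z})$, and the cup product corresponds to the geometric intersection pairing. Working one connected component at a time, write $Z$ as the mapping torus of the handlebody diffeomorphism $\phi\colon N_{g}\to N_{g}$ from the previous lemma; by construction, $\phi=\prod_{j}T_{D_{j}}^{n_{j}}$ is a product of disk twists around pairwise disjoint properly embedded disks $D_{j}\subset N_{g}$ whose boundaries are the vanishing cycles $c_{j}$. Since each $c_{j}$ bounds a disk in $N_{g}$, it is nullhomotopic, so each $T_{D_{j}}$ acts trivially on $\pi_{1}(N_{g})$, and therefore $\phi_{*}=\mathrm{id}$ on $H_{1}(N_{g};\mathbb{Z})$. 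Combined with $H_{2}(N_{g})=0$, the Wang sequence for $N_{g}\hookrightarrow Z\to S^{1}$ identifies $H_{2}(Z;\mathbb{Z})\cong H_{1}(N_{g};\mathbb{Z})=\mathbb{Z}^{g}$.

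The next step is to realize a basis of $H_{2}(Z;\mathbb{Z})$ by tori with trivial intersections. I would choose pairwise disjoint smooth circles $b_{1},\ldots,b_{g}\subset N_{g}$ representing a basis of $H_{1}(N_{g};\mathbb{Z})$, isotoped into the complement of the support of $\phi$ (a tubular neighborhood of $\bigcup_{j}D_{j}$). On each such $b_{i}$, $\phi$ acts as the identity pointwise, so $\Sigma_{i}:=b_{i}\times S^{1}\subset Z$ is a genuinely embedded torus representing the corresponding class in $H_{2}(Z;\mathbb{Z})$. For $i\neq j$, disjointness of $b_{i}$ and $b_{j}$ gives $\Sigma_{i}\cdot\Sigma_{j}=0$. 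For $i=j$, the normal bundle of $\Sigma_{i}$ in $Z$ is the pull-back of the trivial rank-2 oriented bundle $\nu_{N_{g}}(b_{i})\to b_{i}$ to $T^{2}$, hence trivial, so $\Sigma_{i}\cdot\Sigma_{i}=0$. Therefore the intersection form vanishes on a basis of $H_{2}(Z;\mathbb{Z})$, hence identically, and in particular $\sigma(Z)=0$.

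The main obstacle will be the geometric isotopy step: arranging a basis $b_{1},\ldots,b_{g}$ of $H_{1}(N_{g})$ disjoint from the support of $\phi$. This uses the specific structure of the handlebody from the previous lemma, where the $D_{j}$'s are positioned so that the complement still carries a full set of generators of $H_{1}(N_{g})$. If such an arrangement is not immediate, one must instead compute the normal-bundle Euler class directly for tori arising from loops $b_{i}$ that cross the $D_{j}$'s transversely, using that a disk twist acts on the normal bundle at a transverse crossing by a full $2\pi$ rotation (trivial in $SO(2)$) so that the resulting rank-2 bundle over $T^{2}$ still has zero Euler class.
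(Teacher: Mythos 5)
Your overall strategy --- Lefschetz duality, the Wang--sequence identification $H_{2}(Z;\mathbb{Z})\cong H_{1}(N_{g};\mathbb{Z})\cong\mathbb{Z}^{g}$, and representation of the generators by product tori --- is the same as the paper's, and the first two steps are carried out correctly (indeed more explicitly than in the paper's proof). The gap is exactly where you flag it, and it is not a removable technicality. In this construction the disks $D_{j}$ cut $N_{g}$ into simply connected pieces: already for $d=3$ the fiber is a solid torus and the three vanishing cycles are parallel meridians, so the complement of the disks is a union of balls; in general, cutting the handlebody along the disks bounded by the vanishing cycles of the degeneration into $d$ lines leaves $d$ balls. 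Hence no nontrivial class of $H_{1}(N_{g})$ is carried by a circle disjoint from the support of $\phi$, and your main line cannot be executed for any relevant $d$.

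The fallback is also flawed, in a way that matters. What controls the Euler number of the normal bundle of a torus built from a circle $b_{i}$ meeting the twisting disks (say, routed through the pointwise-fixed axis of each disk twist so that $b_{i}\times S^{1}$ is an honest submanifold of the mapping torus) is not whether the total rotation is the trivial \emph{element} of $SO(2)$ but the winding number of the clutching \emph{loop} $b_{i}\to SO(2)$; a full $2\pi$ twist is a degree-one loop, so each transverse intersection of $b_{i}$ with a twisted disk contributes $\pm 1$. The self-intersection of the resulting torus is therefore $\pm\sum_{j}n_{j}\,(b_{i}\cdot D_{j})$, i.e.\ the pairing of $[b_{i}]\in H_{1}(N_{g})$ with $\sum_{j}n_{j}[D_{j}]\in H_{2}(N_{g},\partial N_{g})$, which is not zero in general (for the $I_{3}$ configuration at $d=3$ it equals $\pm 3$). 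So the computation your fallback leads to does not yield $0$, and the discrepancy must be confronted rather than waved away. Note that the paper's own proof takes a superficially different route --- tori ``lying on $\partial Z$'' pushed into the interior of the handlebody --- but realizing a basis of $H_{2}(Z)$ by tori preserved by the monodromy runs into the same obstruction there, so simply adopting the paper's phrasing would not close your gap either; a correct completion has to either prove that $H_{2}(\partial Z)\to H_{2}(Z)$ is onto (in which case exactness of $H_{2}(\partial Z)\to H_{2}(Z)\to H_{2}(Z,\partial Z)$ kills the form) or compute the Euler numbers above and show they vanish.
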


\begin{proof} 
By Lefschetz duality, it is enough to show that the intersection product on $H_{2}(Z, \mathbb{Z})$ is trivial. Each component of $Z$ is homotopy equivalent to a fibration over $S^{1}$ where each fiber is a wedge of $g$ circles. This implies that $H_{2}(Z,\mathbb{Z})$ is a free Abelian group generated by $g$ tori lying on $\partial Z$, each of which is a fibration over $S^{1}$. However, pushing the fibers towards the interior of the handlebody $N_{g}$ at each fiber shows that these homology classes intersect trivially. 

\end{proof}


\begin{cor} \label{cor8}
$\sigma(\pi^{-1}(\hat{T}))=n^{m-1}\sigma({\hat{T}})$,
in particular \\ $\sigma(\pi^{-1}(\hat{T}))=-an^{m-1}\leq-n^{m-1}$ for $d>m-1$.
\end{cor}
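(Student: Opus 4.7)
My plan is to show that the right-hand side of \eqref{etaeq} vanishes by using the replacement $4$-manifold $Z$ constructed in the previous lemmas. Since the $\eta$-invariant depends only on the bounding $3$-manifold, $\eta(\partial\hat T)=\eta(\partial Z)$, and once I produce an $n^{m-1}$-fold unramified cover $\tilde Z\to Z$ with $\partial\tilde Z=\partial\pi^{-1}(\hat T)$ and $\sigma(\tilde Z)=0$, APS applied to $Z$ and $\tilde Z$ will force the two $\eta$-terms in \eqref{etaeq} to cancel.

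First I would construct $\tilde Z$. Since $\hat W$ is confined to the fibers over $q_1,\ldots,q_m$, the restriction of $\pi$ to $\partial\hat T$ is an unramified $n^{m-1}$-fold cover compatible with the $\Sigma_g$-bundle structure over $\bigcup K_i$. To extend this cover over the handlebody fibration $\tilde\varphi:Z\to\bigcup K_i$, I would lift the pairwise disjoint vanishing disks defining each handlebody fiber $N_g$: their boundary circles lift because they are null-homotopic in $N_g$, and in the cover they still bound pairwise disjoint disks, yielding $\tilde Z$ as a (possibly disconnected) handlebody fibration. The argument of Lemma \ref{lemma6} then applies verbatim to give $\sigma(\tilde Z)=0$.

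Having constructed $\tilde Z$, I apply APS to $Z$ and to $\tilde Z$ equipped with the pulled-back cylindrical-end metric and use $\sigma(Z)=\sigma(\tilde Z)=0$ together with multiplicativity of the Pontryagin integral under unramified covers:
\begin{align*}
\eta(\partial Z)&=-\int_Z p_1(Z),\\
\eta(\partial\tilde Z)&=-\int_{\tilde Z}p_1(\tilde Z)=-n^{m-1}\int_Z p_1(Z).
\end{align*}
Thus $\eta(\partial\tilde Z)=n^{m-1}\eta(\partial Z)$, the right-hand side of \eqref{etaeq} vanishes, and $\sigma(\pi^{-1}(\hat T))=n^{m-1}\sigma(\hat T)$. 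The in-particular bound is immediate from $\sigma(\hat T)=(1-d)(d+1-m)=-a$ with $a=(d-1)(d+1-m)\geq 1$ for $d>m-1$, as already computed in \eqref{negative}. The main obstacle is the construction of $\tilde Z$ as an $n^{m-1}$-fold cover of $Z$ carrying a handlebody fibration; specifically one must verify that lifted vanishing cycles still bound disjoint disks in the cover of $N_g$, which should follow from the explicit $4g$-gon construction of $N_g$ in the preceding lemma.
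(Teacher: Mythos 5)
Your overall strategy is exactly the paper's: reduce the right-hand side of \eqref{etaeq} to $\sigma(\tilde Z)-n^{m-1}\sigma(Z)$ for a suitable unramified cover $\tilde Z\to Z$ with $\partial\tilde Z=\partial(\pi^{-1}(\hat T))$, and kill both terms using Lemma~\ref{lemma6}. The deduction of the ``in particular'' clause from \eqref{negative} is also as in the paper. The one place where you diverge is the construction of $\tilde Z$, and that is precisely where your argument has a gap --- one you flag yourself but do not close. You propose to extend the restricted cover $\pi|_{\partial\hat T}$ fiberwise over the handlebodies $N_g$, asserting that the vanishing cycles ``lift because they are null-homotopic in $N_g$.'' This is circular: whether a loop in $\Sigma_g$ lifts to a closed loop in a given cover of $\Sigma_g$ is a property of that cover (the loop must lie in the kernel of the classifying homomorphism $H_1(\Sigma_g)\to(\mathbb Z/n)^{m-1}$), and cannot be inferred from the not-yet-constructed cover of $N_g$. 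In general a cover of $\Sigma_g$ extends over a handlebody only if the classifying map kills the Lagrangian $\ker\bigl(H_1(\Sigma_g)\to H_1(N_g)\bigr)$, so some verification is genuinely required.

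The paper sidesteps this entirely. The divisors $\hat D_i$ defining the Kummer cover are pullbacks of divisors supported on $\{q_1,\ldots,q_m\}\subset\mathbb{CP}^1$, so over $\hat T$ the cover $\pi$ is pulled back from a cover of $\mathbb{CP}^1\setminus\{q_1,\ldots,q_m\}$; in particular its restriction to each fiber $\Sigma_g$ over $K_i$ is \emph{trivial}. The paper therefore simply takes $\tilde Z$ to be $n^{m-2}$ disjoint copies of the pullback of $Z$ under the $n$-fold self-cover $\alpha:S^1\to S^1$; this is automatically a handlebody fibration (the mapping torus of the $n$-th power of the monodromy), has the correct boundary, and Lemma~\ref{lemma6} applies to it verbatim. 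If you want to salvage your fiberwise-extension route, the missing verification is exactly this triviality statement (equivalently, that the monodromy homomorphism vanishes on $H_1$ of a fiber because $f_i/f_m$ is constant on fibers); once you know it, your ``possibly disconnected handlebody fibration'' coincides with the paper's $\tilde Z$ and there is nothing left to lift.
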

\begin{proof}Let $\alpha: S^1\to S^1$ be the $n$-fold covering map and let us denote by $\tilde{Z}$ the $n^{m-2}$ disjoint copies of the pullback of $Z$ under $\alpha$. $\tilde{Z}$ is an $n^{m-1}$-fold unramified covering of $Z$. We have
\begin{align*}
&\sigma(\pi^{-1}(\hat{T}))-n^{m-1}\sigma({\hat{T}})=\eta(\partial (\pi^{-1}(\hat{T})))-n^{m-1}\eta(\partial \hat{T})\\
&=\eta(\partial (\tilde{Z}))-n^{m-1}\eta(\partial Z)\\
&=\sigma(\tilde{Z})-n^{m-1}\sigma(Z)=0
\end{align*}
The last equality is a direct consequence of Lemma~\ref{lemma6}. By Inequality~\ref{negative}, and the fact that $\sigma(\hat{T})=-a$ is an integer, we obtain $$\sigma(\pi^{-1}(\hat{T}))=-an^{m-1}\leq-n^{m-1}.$$
\end{proof}
\begin{thm} $\sigma(Y)=n^{m-1}(-a-\frac{2}{3}f_0)+O(n^{m-2})$ for $d>m-1$. \label{thm9}
\end{thm}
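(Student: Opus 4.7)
The plan is to use Novikov additivity to split $\sigma(Y)$ into a ``bulk'' contribution and local contributions over the $m$ singular fibers, applying Corollary~\ref{cor8} for the bulk and computing the signature of each local piece directly from its intersection form.

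Setting $V_i := \tau^{-1}(\varphi^{-1}(\overline{U}_i))$ and $Y_i := \pi^{-1}(V_i)$, Novikov additivity gives
$$\sigma(Y) = \sigma(\pi^{-1}(\hat{T})) + \sum_{i=1}^m \sigma(Y_i) = -a n^{m-1} + \sum_{i=1}^m \sigma(Y_i),$$
using Corollary~\ref{cor8} for the first term. The remaining task is to show $\sum_i \sigma(Y_i) = -\tfrac{2}{3} f_0 n^{m-1} + O(n^{m-2})$. Over $V_i$, the cover $\pi$ is ramified only along $\hat{W}_i$: for $i < m$, only $\hat{D}_i$ restricts nontrivially; for $i = m$, the common factor $(n-1)\hat{W}_m$ in every $\hat{D}_j$ corresponds to a single nontrivial character of the Galois group. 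Hence $Y_i$ is the disjoint union of $n^{m-2}$ copies of an $n$-fold cyclic cover $\tilde{V}_i \to V_i$ branched along $\hat{W}_i$, and $\sigma(Y_i) = n^{m-2}\sigma(\tilde{V}_i)$.

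The crux is to establish $\sigma(\tilde{V}_i) = -\tfrac{2(n-1)}{3} f_0^{(i)} + O(1)$, where $f_0^{(i)}$ denotes the number of intersection points on $W_i$. To do this I would analyze the intersection form on $H_2(\tilde{V}_i)$, which is generated by the lifts $\tilde{L}_{ij}, \tilde{F}_p$ of the components of $\hat{W}_i$ and the Hirzebruch--Jung chains introduced at the $f_0^{(i)}$ nodes of $\hat{W}_i$. Each HJ chain at a mult-$r$ node has length $(n-1)/r$ with self-intersections $-2, \ldots, -2, -(r+1)$, so it is negative definite of signature $-(n-1)/r$; summing over the $r$ such chains per mult-$r$ point gives $-(n-1)$ per original intersection point, and therefore $-(n-1)f_0^{(i)}$ overall. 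The remaining ``bulk'' contribution from the lifted components, derived by applying the cyclic-cover signature formula to $(V_i, \hat{W}_i)$, adds a leading-order $+\tfrac{n-1}{3}$ per node; here the identity $\hat{W}_i^2 = 0$ (since $\hat{W}_i$ is a fiber of the fibration $\varphi \circ \tau$) is essential in localizing all non-HJ contributions to the nodes. Combining, $\sigma(\tilde{V}_i) = -\tfrac{2(n-1)}{3} f_0^{(i)} + O(1)$; multiplying by $n^{m-2}$ and summing over $i$ (with $\sum_i f_0^{(i)} = f_0$) yields the theorem.

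The main obstacle is deriving the bulk contribution of $+\tfrac{n-1}{3}$ per node at leading order: this requires adapting Hirzebruch's cyclic-cover signature formula to the SNC branch divisor $\hat{W}_i$, combining the global correction with local Dedekind-sum-type data at each HJ singularity, and carefully tracking how self-intersections transform under normalization and resolution. The $\hat{W}_i^2 = 0$ identity from the fibration structure is what makes this localization possible and provides the balance that yields the exact $-\tfrac{2}{3}$ coefficient rather than $-1$.
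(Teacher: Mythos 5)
Your architecture is the same as the paper's: decompose $Y$ by Novikov additivity into $\pi^{-1}(\hat T)$ and the preimages of the $V_i=\tau^{-1}(\varphi^{-1}(U_i))$, use Corollary~\ref{cor8} for the $-an^{m-1}$ term, note that each $\pi^{-1}(V_i)$ retracts onto the preimage of the central fiber so that its second homology is carried by the components of $\pi^{*}(\hat W_i)$, observe that the non-exceptional components number $O(n^{m-2})$ and hence contribute $O(n^{m-2})$, and localize the remaining $-\tfrac{2}{3}f_0n^{m-1}$ at the Hirzebruch--Jung singularities over the nodes. Your count of $-(n-1)$ per node from the negative definite HJ chains is also correct and matches the paper.

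The gap is that the other half of the coefficient --- the $+\tfrac{n}{3}$ per node that turns $-(n-1)$ into $-\tfrac{2n}{3}+O(1)$ --- is asserted rather than derived, and you yourself label it the ``main obstacle.'' This is exactly where the theorem lives: without it the coefficient would be $-1$ instead of $-\tfrac{2}{3}$, and the final inequality in Theorem~\ref{mainthm} would be different. Moreover, your attribution of this correction to ``the bulk contribution from the lifted components'' cannot be right as stated: in a single copy $\tilde V_i$ the lifted components of the lines and of the $F_p$ number $O(1)$ independently of $n$ and have bounded intersection numbers, so their contribution to $\sigma(\tilde V_i)$ is $O(1)$, not $+\tfrac{n-1}{3}f_0^{(i)}$. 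In the paper the correction is the \emph{signature defect} of each singularity $u^n=xy^{r}$: the contribution of a resolved Hirzebruch--Jung singularity to the signature of the ambient $4$-manifold differs from the signature of its plumbing matrix by a defect term coming from the $G$-signature theorem, expressed through the Dedekind sum $s(r,n)$ and evaluated --- using Dedekind reciprocity, the hypothesis $n\equiv 1\pmod{r}$, and the identity $\sum_{j=1}^{r-1}\cot^2(\pi j/r)=\tfrac{(r-1)(r-2)}{3}$ --- to give $+\tfrac{n}{3r}+O(1)$ per singularity (and $+\tfrac{n}{3}+O(1)$ over each double point). Your proposal therefore reduces the statement to precisely the computation that constitutes its proof without carrying it out; to close the argument you must either perform this Dedekind-sum evaluation or supply an equivalent eta-invariant computation for the lens-space boundaries of the singular neighborhoods.
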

\begin{proof}

Let $V_{i}=\tau^{-1}\bigl(\varphi^{-1}(U_i)\bigr)$ and $V=\bigcup V_{i}$. We can obtain $Y$ by glueing $V$ back to $\pi^{-1}(\hat{T})$. Therefore, by Novikov additivity, it will suffice to show that $\sigma(V)=-\frac{2}{3}n^{m-1}f_0+O(n^{m-2})$. First of all, each $V_{i}$ is contractible to the central fiber over $q_{i}$, hence its second homology is generated by the classes of the irreducible components of the total transform $\pi^{*}(\hat{W_{i}})$. It is enough to consider the contribution of the resolution of the $n^{m-2}(f_{1}-t_{2})$ many Hirzebruch--Jung singularities only, since the total number of all other components of $\pi^{*}(\hat{W_{i}})$, hence their contribution to signature, is $O(n^{m-2})$. 

Therefore, the proof will be completed by estimating the contribution of each singularity to the signature. At a singularity above a point of multiplicity $r>2$ (respectively above a double point), the signature of the intersection matrix of the exceptional curves of the resolution is equal to $-(n-1)/r$ (respectively $-(n-1)$), by explicit computation. The contribution of the singularity to the signature of the manifold, however, must be corrected by the signature defect, which can be computed by a cotangent sum (which can be explicitly deduced from the $G$-signature theorem), see \cite[Theorem p. 225]{hirz2}. For a singularity above a point of multiplicity $r$ the signature defect is given in terms of a Dedekind sum by $4n\cdot s(r,n)$. Using the reciprocity of Dedekind sums, the assumption that $n\equiv 1 \pmod r$ and the well-known trigonometric sum $\sum_{j=1}^{r-1}\cot^2(\frac{\pi\cdot j}{r})=\frac{(r-1)(r-2)}{3}$ (see \cite{BY}), we see that
$$\frac{{\rm def}(n;1,n-r)}{n}=\frac{n^2-n\cdot((r-1)(r-2)+3r)+r^2+1}{3rn},$$
so the contribution of such a singularity to the signature is given by $$-(n-1)/r+n/3r+O(1)=-\frac{2n}{3r}+O(1).$$
The contribution for singularities above double points (taking $r=1$ above) is $-(n-1)+n/3+O(1)=-2n/3+O(1)$. As there are $n^{m-2}rt_r$ singularities over multiplicity $r>2$ points and $n^{m-2}t_2$ singularities above double points, the contribution to the signature from singularities is $-2n^{m-1}f_0/3 +O(n^{m-2})$.
\end{proof}




\section{Implications for the Existence of Nets}

\begin{thm} \label{mainthm}
Let $m\geq 4$. If $d\geq m$ an $(m,d)$-net cannot be realized in $\mathbb{CP}^2$. In particular there is no $(4,d)$-net for $d\geq 4$.
\end{thm}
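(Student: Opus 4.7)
The plan is to compare the two asymptotic formulas for $\sigma(Y)$---Theorem~\ref{thm4} and Theorem~\ref{thm9}---for the cover $Y$ associated to a hypothetical $(m,d)$-net. Both are equalities of the form $\sigma(Y)=cn^{m-1}+O(n^{m-2})$ whose leading coefficient $c$ depends only on the combinatorial data of the net, and both hold for every prime $n$ in the infinite arithmetic progression identified in Section~\ref{sec2}. Letting $n\to\infty$ through these primes forces the two leading coefficients to coincide exactly, giving
$$\frac{1}{3}\bigl((m-3)d^{2}-md+3+f_{1}-4f_{0}\bigr)=-a-\frac{2}{3}f_{0},$$
where $a=-\sigma(\hat T)=(d-1)(d-m+1)$ is the positive integer read off from \eqref{negative} under the hypothesis $d\geq m$.

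Next I would clear denominators and simplify. Multiplying through by $3$ and moving everything to one side gives
$$(m-3)d^{2}-md+3+f_{1}-2f_{0}+3(d-1)(d-m+1)=0.$$
Expanding $3(d-1)(d-m+1)=3d^{2}-3md+3m-3$, the $d^{2}$ coefficients combine to $md^{2}$, the $d$ coefficients to $-4md$, and the constants cancel, leaving the remarkably clean identity
$$m(d-1)(d-3)=2f_{0}-f_{1}.$$

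The proof then ends with a short sign check. Since $f_{1}-2f_{0}=\sum_{r\geq 2}(r-2)t_{r}=\sum_{r\geq 3}(r-2)t_{r}\geq 0$, the right-hand side is nonpositive; meanwhile, under $d\geq m\geq 4$ each of the factors $m$, $d-1$, and $d-3$ is strictly positive, so the left-hand side is strictly positive. This contradicts the identity and rules out $(m,d)$-nets in the claimed regime; specializing to $m=4$ then excludes $(4,d)$-nets for every $d\geq 4$. The deep work has already been done in the two signature computations, so the remaining argument is purely algebraic; the only prerequisite to verify is that the two theorems apply simultaneously, which is automatic because Theorem~\ref{thm9} is proved exactly under $d>m-1$, i.e., $d\geq m$.
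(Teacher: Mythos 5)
Your proposal is correct and follows essentially the same route as the paper: equate the leading $n^{m-1}$ coefficients of the two signature computations and derive a sign contradiction from $f_1-2f_0\geq 0$. The only cosmetic difference is that you substitute the exact value $a=(d-1)(d-m+1)$ to reach the identity $m(d-1)(d-3)=2f_0-f_1$, whereas the paper only uses $a\geq 1$ to conclude $((m-3)d-3)(d-1)<0$; both yield the same contradiction (and note your phrase ``the constants cancel'' is slightly off --- they combine to $3m$, which is exactly the constant term of $m(d-1)(d-3)$, so your final identity is nonetheless correct).
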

\begin{proof} Suppose an $(m,d)$-net exists with  $m\geq 4$ and $d\geq m$. By the construction in Section~\ref{sec2} there exists a surface $Y$, whose signature can be computed both as in Theorem~\ref{thm4}  and as in Theorem~\ref{thm9}. Equating the coefficients of $n^{m-1}$ in these quantities, we get 
$$   \frac{1}{3}\Bigl((m-3)d^2-md+3+f_1-2f_0-2f_0\Bigr)=-a-\frac{2}{3}f_0. $$
Noting that $a\geq 1$ and $f_{1}-2f_{0}\geq 0$, this implies 
$$ (m-3)d^2-md+3 =((m-3)d-3)(d-1)< 0. $$ 
However, it is clear that this inequality cannot hold for $m=4, d\geq 4$, or for $m\geq 5, d\geq 3$. This contradiction finishes the proof. 
\end{proof}


As a consequence we obtain the following complete solution to the existence problem of $(m,d)$-nets in $\mathbb{CP}^{2}$.
\begin{thm} 
Let $m\geq 3$, $d\geq 3$. 
\begin{enumerate} 
\item[i)] For $m=3$ and any value of $d$ there exists an $(m,d)$-net.
\item[ii)] For $m=4$ an $(m,d)$-net exists if and only if $d=3$. Furthermore, any $(4,3)$-net is isomorphic to the Hesse configuration.
\item[iii)] For $m\geq 5$ there exists no $(m,d)$-net for any value of $d$.
\end{enumerate}
\end{thm}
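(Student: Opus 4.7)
The plan is to assemble the final theorem from Theorem~\ref{mainthm} together with the prior results summarized in the introduction; no further geometric work is required beyond recording which previous result handles which range.

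For part (i), I would simply invoke Yuzvinsky's construction in \cite{Yuz1}: starting from the multiplication table of an abelian group of order $d$ one produces explicit $(3,d)$-nets (the classical Ceva arrangements being the case of $\mathbb{Z}/d\mathbb{Z}$), and for $d=3$ the Hesse configuration already contains $(3,3)$-subnets.

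For part (ii), existence of a $(4,3)$-net is classical: the nine inflection points and twelve inflection lines of a smooth plane cubic form the Hesse configuration, together with the four totally degenerate fibers in the Hesse pencil. Uniqueness of the $(4,3)$-net up to projective equivalence has been proved several times, e.g.\ in \cite{Sti, Gun}. Non-existence of $(4,d)$-nets for $d\geq 4$ is precisely the $m=4$ instance of Theorem~\ref{mainthm}, which applies since $d\geq 4=m$. This is the only place where the new material of the paper is used.

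For part (iii) I would split on the size of $d$. When $d\geq m$, Theorem~\ref{mainthm} directly forbids an $(m,d)$-net. When $3\leq d\leq m-1$ (so $m\geq 5$ automatically), the bound $m\leq 4$ established by Stipins \cite{Sti}, and strengthened in \cite{PY} to allow multiple components in the degenerate fibers, already rules out any $(m,d)$-net. The only step that is not entirely routine bookkeeping is the $m=4$, $d\geq 4$ case, which was the open case that motivated the whole paper and is settled by Theorem~\ref{mainthm}; everything else is a compilation of known results.
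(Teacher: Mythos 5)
Your proposal is correct, and parts (i) and (ii) coincide with the paper's own proof: a standard $(3,d)$-net construction (Fermat/Ceva arrangements) for existence, the classical Hesse pencil plus the uniqueness results of \cite{Sti, Gun} for $d=3$, and Theorem~\ref{mainthm} for the non-existence of $(4,d)$-nets with $d\geq 4$. Where you diverge is part (iii): you dispose of $m\geq 5$ by citing Stipins' bound $m\leq 4$ from \cite{Sti} (your case split on $d\geq m$ versus $d\leq m-1$ is actually redundant, since that bound already excludes every net with $m\geq 5$ irrespective of $d$). The paper instead uses a small self-contained reduction: deleting one family of lines from an $(m,d)$-net yields an $(m-1,d)$-net, so repeated deletion reduces $m\geq 5$, $d\geq 4$ to the $(4,d)$ case already killed by Theorem~\ref{mainthm}, and the only leftover case $(5,3)$ is excluded because a $(5,3)$-net would contain a $(4,3)$-subnet, forced by uniqueness to be the Hesse configuration, which admits no fifth family. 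The trade-off is that your route leans on the full strength of Stipins' $m\leq 4$ theorem, while the paper's route needs from \cite{Sti} only the uniqueness of the $(4,3)$-net (which it must cite for part (ii) anyway) and otherwise rests on the new Theorem~\ref{mainthm}; both are valid, but the paper's version makes the logical dependence on prior literature slightly leaner.
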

\begin{proof}
\emph{i)} The Fermat arrangement of degree $d$ provides an example for a $(3,d)$-net. For further details or other possibilities of $(3,d)$-nets see \cite{Sti, Urz2, Urz3}.

\emph{ii)} The nonexistence of $(4,d)$-nets for $d\geq 4$ follows directly from Theorem~\ref{mainthm}. The uniqueness for $d=3$ has been proven in several places, see for instance \cite{Sti}. For a tropical proof of the uniqueness see \cite{Gun}.

\emph{iii)} If an $(m,d)$-net exists, then deleting one family of lines gives rise to an $(m-1,d)$-net. Therefore by \emph{ii)} an $(m,d)$-net does not exist for $m\geq 4$ and $d\geq 4$. To finish the proof it suffices to show that a $(5,3)$-net does not exist. However this follows immediately from the uniqueness of the $(4,3)$-net up to projective equivalence. 
\end{proof}
\section*{Acknowledgements}
We are grateful to Alexander Degtyarev, Sergey Finashin, Mustafa Hakan G\"unt\"urk\"un, Mustafa Korkmaz, Ferihe Atalan Ozan and Giancarlo A. Urz\'ua for useful discussions and pertinent remarks on previous versions of this paper. We thank the \.{I}stanbul Center of Mathematical Sciences (IMBM) and Nesin Mathematics Village for their support, where different stages of the project were completed. 
\bibliographystyle{99}

\end{document}